\newcommand{\Rmnum}[1]{\expandafter\@slowromancap\romannumeral#1@}
\numberwithin{equation}{section}
\newtheorem{theorem}{Theorem}[section]
\newtheorem{lemma}{Lemma}[section]
\newtheorem{remark}{Remark}[section]
\begin{document}
\title{\Large On the integer sets with the same representation functions}
\date{}
\author{\large Kai-Jie Jiao,$^{1}$~ Csaba S\'andor,$^{2}$\footnote{Email: csandor@math.bme.hu. This author was supported by the OTKA Grant No. K129335.}~
Quan-Hui Yang$^{1}$\footnote{Email:~yangquanhui01@163.com.}~and~Jun-Yu Zhou$^{1}$}
\date{} \maketitle
 \vskip -3cm
\begin{center}

\vskip -1cm { \small 1. School of Mathematics and Statistics, Nanjing University of Information \\
Science and Technology, Nanjing 210044, China}
 \end{center}

 \begin{center}
{ \small 2. Institute of Mathematics, Budapest
University of Technology and Economics, and MTA-BME Lend\"{u}let Arithmetic Combinatorics Research Group, ELKH, H-1529 B.O. Box, Hungary}
 \end{center}

%
%
%
%
%
%

\begin{abstract}  Let $\mathbb{N}$ be the set of all nonnegative integers. For $S\subseteq \mathbb{N}$ and
$n\in \mathbb{N}$, let $R_S(n)$ denote the number of solutions of the equation
$n=s_1+s_2$, $s_1,s_2\in S$ and $s_1<s_2$. Let $A$ be the set of all nonnegative integers which contain an even number of digits $1$
in their binary representations and $B=\mathbb{N}\setminus A$.
Put $A_l=A\cap [0,2^l-1]$ and $B_l=B\cap [0,2^l-1]$.
In 2017, Kiss and S\'{a}ndor proved that, if
$C\cup D=[0,m]$, $0\in C$ and $C\cap D=\{r\}$, then $R_C(n)=R_D(n)$ 
for every positive integer $n$ if and only if there exists an integer 
$l\ge 1$ such that $r=2^{2l}-1$, $m=2^{2l+1}-2$,
$C=A_{2l}\cup (2^{2l}-1+B_{2l})$ and $D=B_{2l}\cup (2^{2l}-1+A_{2l})$.
This solved a problem of Chen and Lev. In this paper, we prove that,
if $C \cup D=[0, m]\setminus \{r\}$ with $0<r<m$, $C \cap D=\emptyset$ and $0 \in C$, then $R_{C}(n)=R_{D}(n)$ for any nonnegative integer $n$ if and only if there exists an integer $l \geq 2$ such that $m=2^{l}$, $r=2^{l-1}$, $C=A_{l-1} \cup\left(2^{l-1}+1+B_{l-1}\right)$ and $D=B_{l-1} \cup\left(2^{l-1}+1+A_{l-1}\right)$.

{\it Keywords:} S\'{a}rkozy's problem, Thue-Morse sequence, representation function.
\end{abstract}

\section{Introduction}

Let $\mathbb{N}$ be the set of all nonnegative integers. For $S\subseteq \mathbb{N}$ and
$n\in \mathbb{N}$, let $R_S(n)$ denote the number of solutions of the equation
$n=s_1+s_2$, $s_1,s_2\in S$ and $s_1<s_2$. S\'{a}rk\"{o}zy asked whether there exist two
sets $U$ and $V$ of nonnegative integers with infinite symmetric difference and $R_U(n)=R_V(n)$ for all large enough integers $n$. Dombi \cite{dombi} answered this problem affirmatively by using the Thue-Morse sequence. Later Lev \cite{lev}, S\'andor \cite{csandor}, and Tang \cite{tang08} gave a simple proof of this result respectively. For related results, one can refer to \cite{chenwang}, \cite{yangchen}-\cite{yutang}.

Let $A$ be the set of all nonnegative integers which contain an even number of digits $1$
in their binary representations and $B=\mathbb{N}\setminus A$.
For any positive integer $l$, let $A_l=A\cap [0,2^l-1]$ and $B_l=B\cap [0,2^l-1]$.
In 2017, Kiss and S\'{a}ndor \cite{kiss} proved that if
$C$ and $D$ are two sets of nonnegative integers such that $C\cup D=[0, m]$, $C\cap D=\emptyset$ and $0\in C$, then $R_C(n)=R_D(n)$ for every positive integer $n$ if and only if there exists a positive integer $l$ such that $C=A_l$ and $D=B_l$. In 2016, Tang \cite{tang16} proved
that for any given integer $m$, there do not exist subsets $C,D\subseteq \mathbb{N}$
with $\mathbb{N}=C\cup D$ and $C\cap D=\{km:~k\in \mathbb{N}\}$ such that
$R_C(n)=R_D(n)$ for all sufficiently large integers $n$.

For any integer $r$ and $m$, we define $r+m\mathbb{N}=\{r+ma:~a\in \mathbb{N}\}$. Chen and Lev \cite{chenlev} proved that, for any given positive integer $l$, there exist two
subsets $C,D\subseteq \mathbb{N}$ with $C\cup D=\mathbb{N}$ and $C\cap D=2^{2l}-1+(2^{2l+1}-1)\mathbb{N}$ such that $R_C(n)=R_D(n)$
for every positive integer $n$. They also posed the following two problems.

{\bf Problem 1.} Given that $R_C(n)=R_D(n)$ for every positive integer $n$,
$C\cup D=\mathbb{N}$, and $C\cap D=r+m\mathbb{N}$ with integers $r\ge 0$ and
$m\ge 2$, must there exist an integer $l\ge 1$ such that $r=2^{2l}-1$, $m=2^{2l+1}-1$?

{\bf Problem 2.} Given that $R_C(n)=R_D(n)$ for every positive integer $n$,
$C\cup D=[0,m]$, and $C\cap D=\{r\}$ with integers $r\ge 0$ and
$m\ge 2$, must there exist an integer $l\ge 1$ such that $r=2^{2l}-1$, $m=2^{2l+1}-2$,
$C=A_{2l}\cup (2^{2l}-1+B_{2l})$ and $D=B_{2l}\cup (2^{2l}-1+A_{2l})$?

In 2017, Kiss and S\'{a}ndor solved Problem 2 affirmatively. Later, Li and Tang \cite{litang},
Chen, Tang and Yang \cite{chentang} solved Problem 1 under the assumption $0\le r<m$.
Recently, Chen and Chen \cite{chenchen} solved Problem 1 affirmatively.

In this paper, we consider two sets $C$ and $D$ satisfying $C\cup D=[0,m]\setminus \{r\}$ and $C \cap D=\emptyset$. We prove the following result.

\begin{theorem}\label{thm1} Let $C$ and $D$ be two sets of nonnegative integers such that
$C \cup D=[0, m]\setminus \{r\}$ with $0<r<m$, $C \cap D=\emptyset$ and $0 \in C$.
Then for any nonnegative integer $n$, $R_{C}(n)=R_{D}(n)$ if and only if there exists an integer $l \geq 2$ such that $m=2^{l}$, $r=2^{l-1}$, $C=A_{l-1} \cup\left(2^{l-1}+1+B_{l-1}\right)$ and $D=B_{l-1} \cup\left(2^{l-1}+1+A_{l-1}\right)$.
\end{theorem}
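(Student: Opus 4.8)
The plan is to translate the condition $R_C(n)=R_D(n)$ into a single polynomial identity and then exploit the arithmetic of the factor that is forced on the difference $C(x)-D(x)$. Writing $S(x)=\sum_{s\in S}x^s$ for the generating function of a finite set $S$, squaring gives $S(x)^2=2\sum_n R_S(n)x^n+S(x^2)$, so $R_C(n)=R_D(n)$ for all $n$ is equivalent to $C(x)^2-C(x^2)=D(x)^2-D(x^2)$, i.e.\ to
\[
P(x)\,Q(x)=P(x^2),
\]
where $P=C(x)-D(x)$ and $Q=C(x)+D(x)=\sum_{0\le k\le m,\ k\ne r}x^k=\frac{1-x^{m+1}}{1-x}-x^r$. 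Here $P$ has coefficients in $\{-1,0,1\}$, $P(0)=1$ (because $0\in C$), and $\deg P=m$ (since $m\in C\cup D$ as $r<m$). The ``if'' direction is then a direct check: for the claimed sets one computes $P(x)=(1-x^{2^{l-1}+1})\prod_{j=0}^{l-2}(1-x^{2^j})$ and $Q(x)=(1+x^{2^{l-1}+1})\prod_{j=0}^{l-2}(1+x^{2^j})$, and multiplying gives $P(x)Q(x)=(1-x^{2^l+2})\prod_{j=1}^{l-1}(1-x^{2^j})=P(x^2)$.

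For the ``only if'' direction I would extract three facts from the identity $P(x)Q(x)=P(x^2)$. First, evaluating at $x=1$ gives $P(1)(m-1)=0$, hence $P(1)=0$; writing $P(x)=(x-1)^a h(x)$ with $h(1)\ne0$ and comparing the leading behaviour at $x=1$ yields $h(1)Q(1)=2^a h(1)$, that is $m=Q(1)=2^a$ is a power of two, and I set $l=a$. Second, if $P(\alpha)=0$ then $P(\alpha^2)=P(\alpha)Q(\alpha)=0$, so the finite root set of $P$ is closed under squaring; since $P(0)=1$ no root is zero, so each root's forward orbit $\alpha,\alpha^2,\alpha^4,\dots$ repeats and $\alpha$ is a root of unity. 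Consequently all roots of $P(x^2)$, and hence of the polynomial $Q=P(x^2)/P(x)$ and of $f(x):=(1-x)Q(x)=1-x^r+x^{r+1}-x^{m+1}$, are roots of unity.

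The decisive step is the third one. Since $f$ has integer coefficients and all its roots lie on the unit circle, the root multiset is invariant under $\alpha\mapsto\bar\alpha=\alpha^{-1}$, so $f$ equals its reciprocal up to a scalar; matching leading and constant terms forces $x^{m+1}f(1/x)=-f(x)$. Writing this out and cancelling gives $x^{m-r}(1-x)=x^{r}(1-x)$, hence $m=2r$, and with $m=2^l$ this yields $r=2^{l-1}$ (the degenerate case $m=2$, where $R_C\equiv R_D\equiv0$, being set aside so that $l\ge2$). Finally, with $Q$ now determined, the relation $P(x)Q(x)=P(x^2)$ together with $Q(0)=1$ and $P(0)=1$ determines the coefficients of $P$ recursively (the coefficient of $x^n$ on the left contributes $p_n$ via $q_0=1$, while the right side involves only $p_{n/2}$), so $P$ is unique and equals the explicit product above; then $C(x)=\tfrac12(Q+P)$ and $D(x)=\tfrac12(Q-P)$ recover exactly $C=A_{l-1}\cup(2^{l-1}+1+B_{l-1})$ and $D=B_{l-1}\cup(2^{l-1}+1+A_{l-1})$.

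I expect the reciprocity step to be the crux: it is what converts ``all roots are roots of unity'' into the exact value $r=m/2$, and it hinges on $f$ being forced to be antipalindromic. The routine but careful parts are justifying that $Q=P(x^2)/P(x)$ is genuinely a polynomial whose roots are all roots of unity, the recursive uniqueness of $P$, and the final bookkeeping identifying the coefficient vectors of $\tfrac12(Q\pm P)$ with the Thue--Morse sets, which uses $A_{l-1}(x)\pm B_{l-1}(x)=\prod_{j=0}^{l-2}(1\pm x^{2^j})$.
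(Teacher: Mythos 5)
Your proposal is correct, and it takes a genuinely different route from the paper's. The paper keeps the generating-function identity in expanded form (its equation (3.1)) and extracts individual coefficients: subtracting the coefficient of $x^{k-1}$ from that of $x^k$ gives a recurrence for $\chi_C$, which, combined with Lemma \ref{lem1} (to identify $C\cap[0,r-1]$ with $A\cap[0,r-1]$) and the Kiss--S\'andor structure lemmas on the Thue--Morse set (Lemmas \ref{lem3}--\ref{lem5}), first rules out odd $r$ and then forces $r=2^{l-1}$; the cases $1\le r\le 5$ are checked by hand; and $m=2r$ is obtained by explicitly evaluating $R_C$ and $R_D$ at $3\cdot2^{l-1}+1$ and $3\cdot2^{l-1}+2$, invoking Lemma \ref{lem7} for $m-(2^l+1)\ne 2^k-1$, and running an eight-way case analysis (parities of $l-1$ and $k$ crossed with the two structures allowed by Lemma \ref{lem1}) for the remaining range $2^l<m<3\cdot2^{l-1}$. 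You instead read everything off the single factored identity $P(x)Q(x)=P(x^2)$: the order of vanishing at $x=1$ gives $m=Q(1)=2^a$ in three lines; closure of the root set of $P$ under squaring (plus $P(0)=1$) makes every root of $Q$, hence of $f=(1-x)Q=1-x^r+x^{r+1}-x^{m+1}$, a root of unity; the forced antipalindromy $x^{m+1}f(1/x)=-f(x)$ then yields $x^{m-r}(1-x)=x^r(1-x)$ and so $m=2r$ at once; and the recursion $p_n=p_{n/2}-\sum_{i<n}p_iq_{n-i}$ (with $q_0=p_0=1$) replaces Lemma \ref{lem1} for uniqueness. Your argument is substantially shorter, needs no Thue--Morse combinatorics and no case analysis, and each of its nonroutine steps (roots of $P$ are roots of unity; a real polynomial with all roots on the unit circle and nonzero constant term is $\pm$ its reciprocal) checks out; the paper's argument is more elementary in the sense that it never leaves integer coefficient comparisons, but at a considerable cost in length. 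One shared wrinkle you should state explicitly rather than ``set aside'': for $m=2$, $r=1$ the pair $C=\{0\}$, $D=\{2\}$ satisfies every hypothesis with $R_C\equiv R_D\equiv 0$ and fits the pattern with $l=1$, so the conclusion ``$l\ge 2$'' as literally stated excludes a genuine solution --- the paper's own proof of the case $r=1$ in fact finds this solution, so the issue lies in the theorem's formulation, not in your argument.
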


\section{Preliminary Lemmas}

\begin{lemma}\label{lem1} For positive integers $0<r_1<r_2<\cdots<r_s\le m$,
there exist at most a pair of sets $(C,D)$ such that $C\cup D=[0,m]\setminus \{r_1,r_2,\ldots,r_s\}$, $0\in C$, $C\cap D=\emptyset$ and $R_C(k)=R_D(k)$ for all $k\le m$.
\end{lemma}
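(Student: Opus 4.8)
The plan is to translate everything into ordinary generating functions and show that the hypotheses force a recursion that pins the pair $(C,D)$ down uniquely. For a set $S\subseteq\mathbb{N}$ write $S(x)=\sum_{s\in S}x^{s}$. Since $C\cap D=\emptyset$ and $C\cup D=[0,m]\setminus\{r_1,\ldots,r_s\}$, the polynomial
\[
P(x):=C(x)+D(x)=\sum_{k\in[0,m]\setminus\{r_1,\ldots,r_s\}}x^{k}
\]
is completely determined by the given data $m,r_1,\ldots,r_s$ and does not depend on the particular pair; moreover, because $0<r_1$, its constant term is $p_0=1$. I would then record the standard identity $\sum_n R_S(n)x^{n}=\tfrac12\bigl(S(x)^2-S(x^2)\bigr)$, valid because $S(x)^2$ counts ordered pairs, the diagonal contributes $S(x^2)$, and each pair with $s_1<s_2$ is thereby counted once. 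Hence $R_C(k)=R_D(k)$ for all $k\le m$ is equivalent to $C(x)^2-C(x^2)\equiv D(x)^2-D(x^2)\pmod{x^{m+1}}$. Setting $F(x):=C(x)-D(x)$ and factoring the difference of squares turns this into
\[
F(x)\,P(x)\equiv F(x^2)\pmod{x^{m+1}}.
\]

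Next I would compare coefficients of $x^{n}$ for $0\le n\le m$. Writing $F(x)=\sum_k f_k x^{k}$ (so each $f_k\in\{-1,0,1\}$, with $f_k=0$ exactly at the excluded points $r_i$), the left-hand side contributes $\sum_{j=0}^{n}f_j p_{n-j}$, while the right-hand side contributes $f_{n/2}$ when $n$ is even and $0$ when $n$ is odd. Isolating the $j=n$ term and using $p_0=1$ gives, for even $n$,
\[
f_n=f_{n/2}-\sum_{j=0}^{n-1}f_j\,p_{n-j},
\]
and for odd $n$,
\[
f_n=-\sum_{j=0}^{n-1}f_j\,p_{n-j}.
\]
For every $n\ge 1$ one has $n/2<n$, so the right-hand side involves only $f_0,\ldots,f_{n-1}$; thus the coefficients of $F$ are determined recursively once $f_0$ is known. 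Finally, $0\in C$ together with $0\notin\{r_1,\ldots,r_s\}$ forces $f_0=1$. Therefore $F$ is uniquely determined, and since $C(x)=\tfrac12\bigl(P(x)+F(x)\bigr)$ and $D(x)=\tfrac12\bigl(P(x)-F(x)\bigr)$ with $P$ fixed, the pair $(C,D)$ is unique. This is exactly the assertion that at most one such pair exists; note that existence is not claimed and need not be addressed.

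The computations are all routine; the one point that genuinely makes the argument work is that the convolution isolates $f_n$ against the coefficient $p_0=1$, while the only same-degree contribution on the right, namely $f_{n/2}$, carries a strictly smaller index. This is what closes the recursion. The hypothesis $0\in C$ is equally essential: it fixes $f_0=+1$ rather than $-1$, breaking the symmetry $(C,D)\leftrightarrow(D,C)$ that would otherwise produce two solutions. I expect the only care required is bookkeeping which $p_i$ and $f_i$ vanish at the excluded points and checking that every index stays within $[0,m]$, so I do not anticipate a serious obstacle here.
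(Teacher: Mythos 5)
Your proof is correct. It takes a formally different route from the paper's: the paper argues by contradiction from the minimal index $v$ at which two hypothetical pairs $(C_1,D_1)$ and $(C_2,D_2)$ first disagree, observing that $R_{C_1}(v)\neq R_{C_2}(v)$ (since $0$ lies in both $C_i$ and the sets agree below $v$, the pair $(0,v)$ is the only possible source of discrepancy) while $R_{D_1}(v)=R_{D_2}(v)$ (since $0\notin D_i$ and $D_1,D_2$ agree below $v$, being the common complement of $C_1\cap[0,v-1]=C_2\cap[0,v-1]$ there). Your recursion $f_n=f_{n/2}-\sum_{j<n}f_j p_{n-j}$ for even $n$ and $f_n=-\sum_{j<n}f_j p_{n-j}$ for odd $n$ is the same mechanism in power-series language: isolating $f_n p_0$ with $p_0=1$ plays the role of the pair $(0,v)$, and the inequality $n/2<n$ for $n\ge 1$ plays the role of the observation that $R_{D_i}(v)$ depends only on elements below $v$. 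What your version buys is that it is explicitly constructive---it yields an algorithm computing the unique candidate pair coefficient by coefficient---and it uses exactly the generating-function machinery the paper itself deploys later when expanding $p_C(x)^2-p_C(x^2)$ in the proof of Theorem 1.1, so the two arguments integrate well. The one point worth stating explicitly is that the coefficient equation at $n=0$ is vacuous ($f_0p_0=f_0$), so the seed $f_0=1$ must come from the hypotheses $0\in C$, $C\cap D=\emptyset$ and $0\notin\{r_1,\dots,r_s\}$; you do note this, so there is no gap.
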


\begin{proof} Suppose that there exist two pair of sets $\left(C_{1}, D_{1}\right),\left(C_{2}, D_{2}\right)$ satisfying the condition of Lemma \ref{lem1}.
Let $v$ be the minimal nonnegative integer such that $\chi_{C_{1}}(v) \neq \chi_{C_{2}}(v)$.
Then $C_{1} \cap[0, v-1]=C_{2} \cap[0, v-1]$ and $R_{C_1}(v)\not=R_{C_2}(v)$ since $0\in C_1,C_2$.
By $R_{C_1}(v)=R_{D_1}(v)$ and $R_{C_2}(v)=R_{D_2}(v)$,
it suffices to prove that $R_{D_1}(v)=R_{D_2}(v)$ for a contradiction.

Since $0 \in C$ and $C \cap D=\emptyset$, it follows that $0\notin D$, and so $$R_{D_{1}}(v)=\left|\left\{\left(d, d^{\prime}\right): d<d^{\prime}<v, d, d^{\prime} \in D_{1}, d+d^{\prime}=v\right\}\right|.$$

Let $t$ be an integer with $0\le t\le s$ such that $r_{t}<v<r_{t+1}$. Here we define
$r_{0}=0$, $r_{s+1}=+\infty$.

Since
$$[0, v-1] \setminus\left\{r_{1},r_2, \ldots, r_{t}\right\}=\left(C_{1} \cap[0, v-1]\right) \cup\left(D_{1} \cap[0, v-1]\right),$$
it follows that
$$D_{1} \cap[0, v-1]=\left([0, v-1]\setminus\left\{r_{1}, r_2,\ldots, r_{t}\right\}\right)
\setminus \left(C_{1} \cap[0, v-1]\right)=[0, v-1] \setminus\left(\left(C_{1} \cap[0, v-1]\right) \cup\left\{r_{1},r_2, \ldots, r_{s}\right\}\right).$$
Similarly,
$$D_{2} \cap[0, v-1]=[0, v-1]\setminus\left(\left(C_{2} \cap[0, v-1]\right) \cup\left\{r_{1},r_2, \ldots, r_{t}\right\}\right).$$
Hence $D_{1} \cap[0, v-1]=D_{2} \cap[0, v-1]$, and so $R_{D_{1}}(v)=R_{D_{2}}(v)$.
\end{proof}

\begin{remark} If $\{r_1,r_2,\ldots,r_s\}=\emptyset$, then Lemma \ref{lem1} also holds.
\end{remark}

\begin{lemma}\label{lem2} Suppose that $C,D$ are two distinct sets satisfying ${C} \cup {D}=[0, {m}]\setminus \left\{{r}_{1}, \ldots, {r}_{s}\right\}$, ${C} \cap {D}=\emptyset$ and
${R}_{C}({n})={R}_{D}({n})$ for any positive integer $n$. If ${C}^{\prime}={m}-{C}$, ${D}^{\prime}={m}-{D}$, then ${C}^{\prime} \cup {D}^{\prime}=[0, {m}] \setminus \left\{{m}-{r}_{1}, \ldots, {m}-{r}_{s}\right\}, {C}^{\prime} \cap {D}^{\prime}=\emptyset$, and ${R}_{{C}^{\prime}}({n})={R}_{{D}^{\prime}}({n})$ for all nonnegative integers $n$.
\end{lemma}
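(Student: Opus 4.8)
The plan is to exploit the reflection $x \mapsto m - x$, which is an involution of the interval $[0,m]$; reflecting a subset $S \subseteq [0,m]$ to $m - S$ should preserve all the relevant combinatorial data while reversing the order of the two summands in each representation. First I would dispose of the two set-theoretic assertions. Because $x \mapsto m - x$ is a bijection of $[0,m]$ onto itself, it commutes with relative complementation, so
$$C' \cup D' = (m - C) \cup (m - D) = m - (C \cup D) = m - \big([0,m] \setminus \{r_1, \dots, r_s\}\big) = [0,m] \setminus \{m - r_1, \dots, m - r_s\},$$
the last equality holding because the reflection carries $[0,m]$ onto $[0,m]$ and sends each $r_i$ to $m - r_i$. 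Injectivity of the reflection likewise gives $C' \cap D' = m - (C \cap D) = m - \emptyset = \emptyset$.

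The core of the argument is the identity $R_{m - S}(k) = R_S(2m - k)$, valid for every $S \subseteq [0,m]$ and every integer $k$. To prove it I would write a pair counted by $R_{m-S}(k)$, say $c_1' < c_2'$ with $c_1' + c_2' = k$ and $c_1', c_2' \in m - S$, in the form $c_i' = m - c_i$ with $c_i \in S$; then $c_1' < c_2'$ becomes $c_1 > c_2$ while $c_1' + c_2' = k$ becomes $c_1 + c_2 = 2m - k$, so $(c_1', c_2') \mapsto (c_2, c_1)$ is a bijection onto the pairs counted by $R_S(2m - k)$. Applying this with $S = C$ and $S = D$ yields $R_{C'}(n) = R_C(2m - n)$ and $R_{D'}(n) = R_D(2m - n)$ for every integer $n$. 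If $1 \le n \le 2m - 1$, then $2m - n$ is a positive integer and the hypothesis $R_C = R_D$ forces $R_{C'}(n) = R_{D'}(n)$; for the remaining nonnegative integers $n$ (namely $n = 0$ or $n \ge 2m$) the sum is too small or too large to be realised by a pair of distinct elements of $[0,m]$ — since the largest such sum is $m + (m-1) = 2m - 1$ — so both $R_{C'}(n)$ and $R_{D'}(n)$ vanish. Hence $R_{C'}(n) = R_{D'}(n)$ for all nonnegative integers $n$.

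The only delicate point, and the place where I would be most careful, is the order reversal in the key identity: reflection turns the constraint $s_1 < s_2$ into $s_1 > s_2$. Since $R_S$ counts unordered pairs of distinct elements, this merely swaps the roles of the two summands and leaves the count unchanged; once this is checked, the rest of the proof is routine bookkeeping over the range of $n$.
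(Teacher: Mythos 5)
Your proof is correct and follows essentially the same route as the paper's: the reflection bijection yielding the identity $R_{m-S}(k)=R_S(2m-k)$, applied to $C$ and $D$ and combined with the hypothesis at $2m-n$. You are in fact slightly more careful than the paper, which silently skips the set-theoretic claims and the boundary values $n=0$ and $n\ge 2m$ (where, as you note, both counts vanish trivially since two distinct elements of $[0,m]$ sum to at most $2m-1$).
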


\begin{proof} For $k=0,1,\ldots,2m$,
\begin{eqnarray*}
R_{C^{\prime}}(k)&=&\left|\left\{\left(c, c^{\prime}\right): c<c^{\prime}, c, c^{\prime} \in C^{\prime}, c+c^{\prime}=k\right\}\right| \\
&=&\left|\left\{\left(c, c^{\prime}\right): c<c^{\prime}, m-c, m-c^{\prime} \in C, c+c^{\prime}=k\right\}\right| \\
&=&\left|\left\{\left(m-c, m-c^{\prime}\right): c<c^{\prime}, m-c, m-c^{\prime} \in C, 2 m-\left(c+c^{\prime}\right)=2 m-k\right\}\right| \\
&=&R_{C}(2 m-k).
\end{eqnarray*}
Similarly, $R_{D^{\prime}}(k)=R_{D}(2 m-k)$. Since $R_{C}(2 m-k)=R_{D}(2 m-k)$, it follows that $R_{C^{\prime}}(k)=R_{D^{\prime}}(k)$.
\end{proof}

\begin{lemma}\label{lem3}(See \cite[Claim 3]{kiss}) If for some positive integer $M$, the integers $M-1, M-2, M-4,
M-8,\ldots, M-2^{\left\lceil\log _{2} M\right\rceil-1}$ are all contained in the set $A$, then $\left\lceil\log _{2} M\right\rceil$ is odd and $M=2^{\left\lceil\log _{2} M\right\rceil}-1$.
\end{lemma}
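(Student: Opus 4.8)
The plan is to translate each membership condition $M-2^{j}\in A$ into a statement about the binary digit sum $s(n)$ (the number of $1$'s in the binary expansion of $n$), using that $n\in A$ precisely when $s(n)$ is even. Write $k=\lceil\log_{2}M\rceil$ and expand $M=\sum_{i\ge 0}b_{i}2^{i}$ in binary. First I would pin down the top of $M$: since $\lceil\log_{2}M\rceil=k$ we have $2^{k-1}<M\le 2^{k}$, and the case $M=2^{k}$ is immediately excluded, because then $M-2^{k-1}=2^{k-1}$ has digit sum $1$, so $M-2^{k-1}\notin A$, contradicting the hypothesis. Thus $2^{k-1}<M<2^{k}$, so $M$ has exactly $k$ binary digits and leading digit $b_{k-1}=1$.

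The heart of the argument is to track how subtracting a power $2^{j}$ (for $0\le j\le k-1$) changes the digit sum. If $b_{j}=1$, subtracting $2^{j}$ simply clears that bit, so $s(M-2^{j})=s(M)-1$. Applying this to the leading bit $j=k-1$ and using $M-2^{k-1}\in A$ forces $s(M)-1$ to be even, i.e.\ $s(M)$ is odd. If instead $b_{j}=0$, subtracting $2^{j}$ triggers a borrow up to the next $1$-bit $j'>j$ (which exists since $b_{k-1}=1$): bit $j'$ is cleared while bits $j,j+1,\dots,j'-1$ are all set, giving $s(M-2^{j})=s(M)-1+(j'-j)$.

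With these two formulas in hand, I would argue that $M$ can have no $0$ among its bottom $k$ bits. Suppose not, and let $j$ be the largest index in $\{0,\dots,k-1\}$ with $b_{j}=0$; then $j\le k-2$ and $b_{j+1}=1$, so the borrow has length exactly one ($j'=j+1$) and the digit sum is unchanged: $s(M-2^{j})=s(M)-1+1=s(M)$, which is odd. Hence $M-2^{j}\notin A$, contradicting the hypothesis. Therefore $b_{0}=b_{1}=\cdots=b_{k-1}=1$, that is $M=2^{k}-1$. Finally $s(M)=k$, and since $s(M)$ was shown to be odd, $k$ is odd, completing the proof.

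The routine but essential piece to get right is the borrow bookkeeping for the $b_{j}=0$ case; the single genuine idea is the contrast between the leading $1$-bit (which drops the digit sum by one, forcing $s(M)$ odd) and the highest $0$-bit (whose length-one borrow leaves the digit sum unchanged, forcing $s(M)$ even), two parities that are incompatible unless no $0$-bit exists. I expect the main subtlety to lie not in the bit analysis itself but in the boundary handling of $k=\lceil\log_{2}M\rceil$ and the exclusion of $M=2^{k}$, which must be dispatched before the leading digit is known to equal $1$.
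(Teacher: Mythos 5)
Your proof is correct. Note that the paper itself gives no proof of this lemma---it is quoted verbatim from Kiss and S\'andor \cite[Claim 3]{kiss}---so there is nothing internal to compare against; your digit-sum and borrow analysis (the leading $1$-bit forcing $s(M)$ odd versus a highest $0$-bit whose length-one borrow would force $s(M)$ even) is the standard argument and matches the spirit of the cited source. The boundary cases you flag ($M=2^{k}$ and the identification of the leading bit) are handled correctly.
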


\begin{lemma}\label{lem4}(See \cite[Claim 4]{kiss}) If for some positive integer $M$, the integers $M-1, M-2, M-4,
M-8,\ldots, M-2^{\left\lceil\log _{2} M\right\rceil-1}$ are all contained in the set $B$, then $\left\lceil\log _{2} M\right\rceil$ is even and $M=2^{\left\lceil\log _{2} M\right\rceil}-1$.
\end{lemma}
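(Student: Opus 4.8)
The plan is to deduce Lemma \ref{lem4} from Lemma \ref{lem3} by a single descent step that trades the ``$B$-ladder'' at $M$ for the ``$A$-ladder'' at $(M-1)/2$. Throughout, write $s(n)$ for the number of $1$'s in the binary expansion of $n$, so that $n\in A$ exactly when $s(n)$ is even and $n\in B$ exactly when $s(n)$ is odd. The only facts I need are the recursions $s(2n)=s(n)$ and $s(2n+1)=s(n)+1$, which translate into the membership rules
\begin{equation*}
2n\in A\iff n\in A,\qquad 2n+1\in A\iff n\in B,
\end{equation*}
together with the complementary statements obtained by interchanging $A$ and $B$. Put $k=\lceil\log_2 M\rceil$, so that $2^{k-1}<M\le 2^k$; the hypothesis reads $M-2^j\in B$ for all $j=0,1,\dots,k-1$. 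The base case $k=2$ is handled by inspection (only $M=3$ satisfies the hypothesis, and it obeys the conclusion), so I assume $k\ge 3$.

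First I would force $M$ to be odd. If $M=2M'$ were even, the ladder would contain both $M-1$ and $M-2$ (here $k\ge 2$ is used); but $M-2=2(M'-1)$ and $M-1=2(M'-1)+1$, so the membership rules put $M-1$ and $M-2$ in opposite classes, and hence exactly one of them in $B$, contradicting the hypothesis. Thus $M=2M'+1$ with $M'=(M-1)/2$. I then push each rung of the ladder through the same rules: from $M-1=2M'\in B$ I read off $M'\in B$, while for $1\le j\le k-1$ the identity $M-2^j=2(M'-2^{j-1})+1$ combined with $M-2^j\in B$ forces $M'-2^{j-1}\in A$. This yields
\begin{equation*}
M'-2^i\in A\qquad\text{for every } i=0,1,\dots,k-2 .
\end{equation*}

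It remains to recognise this as precisely the hypothesis of Lemma \ref{lem3} for $M'$ and to apply it. Since $M$ is odd with $2^{k-1}<M\le 2^k$, I have $2^{k-1}+1\le M\le 2^k-1$, whence $2^{k-2}\le M'\le 2^{k-1}-1$. The endpoint $M'=2^{k-2}$ is impossible, because it would require $M'-2^{k-3}=2^{k-3}\in A$ while in fact $2^{k-3}\in B$; therefore $M'\ge 2^{k-2}+1$ and $\lceil\log_2 M'\rceil=k-1$. The displayed relations are now exactly ``$M'-1,M'-2,\dots,M'-2^{\lceil\log_2 M'\rceil-1}\in A$'', so Lemma \ref{lem3} gives that $k-1$ is odd and $M'=2^{k-1}-1$. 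Unwinding, $k$ is even and $M=2M'+1=2^k-1$, as claimed; the converse direction is the routine verification that $s(2^k-1-2^j)=k-1$ is odd when $k$ is even. I expect the only delicate point to be the boundary bookkeeping in this last paragraph---securing $\lceil\log_2 M'\rceil=k-1$ and excluding the degenerate endpoint $M'=2^{k-2}$---since the descent step and the parity translations are otherwise forced term-by-term by the two recursions.
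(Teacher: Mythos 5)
The paper contains no proof of this lemma at all --- it is quoted directly as Claim~4 of Kiss and S\'andor --- so there is no internal argument to compare against; the nearest analogue is the paper's own proof of Lemma~\ref{lem5}, which likewise reduces to Lemma~\ref{lem3} by a one-step descent. Your proof is correct and is essentially the odd counterpart of that device: evenness of $M$ is excluded because $M-1=2(M'-1)+1$ and $M-2=2(M'-1)$ always lie in opposite classes, and then the recursions $s(2n)=s(n)$, $s(2n+1)=s(n)+1$ convert the $B$-ladder at $M=2M'+1$ into the $A$-ladder at $M'=(M-1)/2$. The bookkeeping you single out as delicate is indeed the only point needing care, and you handle it correctly: the endpoint $M'=2^{k-2}$ is killed by $M'-2^{k-3}=2^{k-3}\in B$ (legitimate since $k\ge 3$), which secures $\lceil\log_2 M'\rceil=k-1$ and makes the derived conditions exactly the hypothesis of Lemma~\ref{lem3}. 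Two cosmetic remarks: the closing ``converse direction'' is not part of the statement and can be dropped; and your argument starts at $k=\lceil\log_2 M\rceil\ge 2$, which is harmless because for $M=2$ the lemma as literally printed is false ($1\in B$ yet $\lceil\log_2 2\rceil=1$ is odd) --- an artifact of quoting the claim outside the range in which it is actually applied, not a defect of your proof.
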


\begin{lemma}\label{lem5} If for some even positive integer $M$, the integers $M-2, M-4, \ldots, M-2^{\left\lceil\log _{2} M\right\rceil-1}$ are all contained in the set $A$, then $M=2^{\left\lceil\log _{2} M\right\rceil}-2$.
\end{lemma}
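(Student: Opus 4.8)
The plan is to reduce Lemma~\ref{lem5} to Lemma~\ref{lem3} by exploiting the fact that $M$ is even. Write $k=\lceil\log_2 M\rceil$, so that $2^{k-1}<M\le 2^k$, and since $M$ is even set $M=2M'$. The key observation is that for each $j$ with $1\le j\le k-1$ we have $M-2^j=2\bigl(M'-2^{j-1}\bigr)$, and doubling a nonnegative integer simply appends a $0$ to its binary expansion and hence leaves the number of digits $1$ unchanged. Consequently $M-2^j\in A$ if and only if $M'-2^{j-1}\in A$.

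Applying this equivalence to the hypothesis, the assumption that $M-2,M-4,\ldots,M-2^{k-1}$ all lie in $A$ becomes the statement that $M'-1,M'-2,M'-4,\ldots,M'-2^{k-2}$ all lie in $A$. First I would check the two routine facts that make the reduction legitimate: that $M'-2^{k-2}>0$, which follows from $M'=M/2>2^{k-2}$ so that the listed integers are genuinely nonnegative, and that $\lceil\log_2 M'\rceil=k-1$. The latter holds because $2^{k-2}<M'\le 2^{k-1}$ forces $\lceil\log_2 M'\rceil=k-1$ in both the strict and the boundary case $M'=2^{k-1}$.

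With these identifications in hand, the list $M'-2^i$ for $i=0,1,\ldots,k-2$ is exactly $M'-1,M'-2,M'-4,\ldots,M'-2^{\lceil\log_2 M'\rceil-1}$, which is precisely the hypothesis of Lemma~\ref{lem3} for $M'$. That lemma then yields that $\lceil\log_2 M'\rceil=k-1$ is odd and that $M'=2^{k-1}-1$. Multiplying back by $2$ gives $M=2M'=2^k-2=2^{\lceil\log_2 M\rceil}-2$, as claimed. Note that invoking Lemma~\ref{lem3} automatically excludes the boundary case $M=2^k$ (equivalently $M'=2^{k-1}$), since there $M'-2^{k-2}=2^{k-2}\in B$ already violates the hypothesis; so no separate treatment of $M=2^k$ is required.

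The main obstacle, and essentially the only place that needs care, is the bookkeeping of the index ranges and the value of $\lceil\log_2 M'\rceil$ after halving; all the arithmetic content is carried by the digit-sum preserving property of doubling together with the already established Lemma~\ref{lem3}. For completeness one should also note the degenerate case $M=2$, where $k=1$ and the list of hypotheses is empty: here the statement is to be read under the implicit assumption $k\ge 2$ (so that Lemma~\ref{lem3} is applied to an integer $M'\ge 2$), which is the only regime arising in the application to Theorem~\ref{thm1}.
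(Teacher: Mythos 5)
Your proposal is correct and follows essentially the same route as the paper: halve $M$, use the fact that doubling preserves the binary digit sum to transfer the hypothesis to $M'=M/2$, and apply Lemma~\ref{lem3}. The paper states this reduction in one line without the index bookkeeping you carry out; your extra checks (that $\lceil\log_2 M'\rceil=\lceil\log_2 M\rceil-1$ and the remark on the degenerate case $M=2$) are sound and only make the argument more explicit.
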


\begin{proof} If for some even positive integer $M$, the integers $M-2, M-4, \ldots, M-2^{\left\lceil\log _{2} M\right\rceil-1}$ are all contained in the set $A$, then $\frac{M}{2}-1,\frac{M}{2}-2,\ldots,
\frac{M}{2}-2^{\left\lceil\log _{2} \frac{M}{2}\right\rceil-1}$ are all contained in the set $A$. By Lemma
\ref{lem3}, it follows that $\frac{M}{2}=2^{\left\lceil\log _{2} \frac{M}{2}\right\rceil}-1$, and so
$M=2^{\left\lceil\log _{2} M\right\rceil}-2$.
\end{proof}

\begin{lemma}\label{lem6}(See \cite[Theorem 3]{kiss}) Let $C$ and $D$ be sets of nonnegative integers such that $C\cup D=[0,m]$
and $C\cap D=\emptyset$, $0\in C$. Then $R_C(n)=R_D(n)$ for every positive integer $n$ if and only if
there exists an nonnegative integer $l$ such that $C=A_l$ and $D=B_l$.
\end{lemma}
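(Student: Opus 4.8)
The plan is to translate the entire problem into a single polynomial functional equation and then read the structure off from it. Writing $S(x)=\sum_{s\in S}x^{s}$ for a finite set $S$, the identity $\sum_{n}R_{S}(n)x^{n}=\tfrac12\bigl(S(x)^{2}-S(x^{2})\bigr)$ shows that $R_{C}(n)=R_{D}(n)$ for every $n$ is equivalent to $C(x)^{2}-D(x)^{2}=C(x^{2})-D(x^{2})$. Putting $P(x)=C(x)-D(x)=\sum_{j=0}^{m}\varepsilon_{j}x^{j}$ with $\varepsilon_{j}\in\{\pm1\}$, and $Q(x)=C(x)+D(x)=\sum_{j=0}^{m}x^{j}=\frac{x^{m+1}-1}{x-1}$ (the two generating functions add up to that of $[0,m]$ since $C\cup D=[0,m]$ is a disjoint union), the condition becomes the clean equation $Q(x)P(x)=P(x^{2})$, with $\varepsilon_{0}=1$ because $0\in C$.

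For the ``if'' direction I would use the factorisations $\sum_{j=0}^{2^{l}-1}x^{j}=\prod_{i=0}^{l-1}\bigl(1+x^{2^{i}}\bigr)$ and $\prod_{i=0}^{l-1}\bigl(1-x^{2^{i}}\bigr)=\sum_{n=0}^{2^{l}-1}(-1)^{s_{2}(n)}x^{n}$, where $s_{2}(n)$ is the number of $1$'s in the binary expansion of $n$; the latter sum is exactly the generating function of $A_{l}$ minus that of $B_{l}$. With $m=2^{l}-1$, $P_{l}(x)=\prod_{i=0}^{l-1}(1-x^{2^{i}})$ and $Q_{l}(x)=\prod_{i=0}^{l-1}(1+x^{2^{i}})$ one gets $Q_{l}(x)P_{l}(x)=\prod_{i=0}^{l-1}\bigl(1-x^{2^{i+1}}\bigr)=P_{l}(x^{2})$, so the pair $(A_{l},B_{l})$ satisfies the equation.

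The substance is the ``only if'' direction, which I would extract from $Q(x)P(x)=P(x^{2})$ by comparing coefficients. Writing $E_{k}=\varepsilon_{0}+\cdots+\varepsilon_{k}$, the coefficient of $x^{k}$ in $Q(x)P(x)$ equals $E_{k}$ for $0\le k\le m$, whereas in $P(x^{2})$ it is $\varepsilon_{k/2}$ for even $k$ and $0$ for odd $k$. Matching in the range $0\le k\le m$ gives $E_{k}=0$ for odd $k$ and $E_{k}=\varepsilon_{k/2}$ for even $k$; taking successive differences $\varepsilon_{k}=E_{k}-E_{k-1}$ yields the Thue--Morse recursions $\varepsilon_{2j}=\varepsilon_{j}$ and $\varepsilon_{2j+1}=-\varepsilon_{j}$. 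Together with $\varepsilon_{0}=1$ these force $\varepsilon_{n}=(-1)^{s_{2}(n)}$ for every $n\le m$, i.e. $C=A\cap[0,m]$ and $D=B\cap[0,m]$. Evaluating the functional equation at $x=1$ gives $(m+1)P(1)=P(1)$, hence $P(1)=0$ for $m\ge1$; since a sum of $\pm1$'s vanishes only when the number of terms is even, $m$ must be odd, and I set $M=(m+1)/2$.

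It then remains to pin down $m$, and this is the step I expect to be the main obstacle. Comparing coefficients of $x^{k}$ for $m<k\le2m$ and using $E_{m}=P(1)=0$, a short computation shows that both the even and odd cases of $k$ collapse to a single rigidity condition, $\varepsilon_{M+i}=-\varepsilon_{i}$ for all $0\le i\le M-1$; equivalently $s_{2}(M+i)\equiv s_{2}(i)+1\pmod 2$ on that range. Taking $i=0$ forces $s_{2}(M)$ to be odd. Now suppose $M$ is not a power of $2$ and let $a=\lfloor\log_{2}M\rfloor$, so that $0<2^{a}<M$ and $i=2^{a}$ is admissible; adding $2^{a}$ to $M$ produces exactly one carry, because the leading bit of $M$ at position $a$ overflows into the empty position $a+1$ and stops. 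Hence $s_{2}(M+2^{a})=s_{2}(M)$ is odd, contradicting the requirement that it be even. Therefore $M=2^{l-1}$, so $m=2^{l}-1$, and combined with $\varepsilon_{n}=(-1)^{s_{2}(n)}$ this gives $C=A_{l}$ and $D=B_{l}$. The delicate points to handle without slack are the exact range of $k$ for which each coefficient identity is imposed and the carry count in the final step; both are elementary, and the degenerate case $m=0$ (giving $l=0$) is immediate.
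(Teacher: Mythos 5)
Your proof is correct. Note that the paper does not prove this lemma at all --- it is quoted from Kiss and S\'andor \cite[Theorem 3]{kiss} --- so there is no internal proof to compare against; but your argument is essentially the standard one from that source and from the present paper's proof of Theorem \ref{thm1}: reduce $R_C=R_D$ to the polynomial identity $Q(x)P(x)=P(x^2)$ with $P=C(x)-D(x)$, $Q(x)=\frac{x^{m+1}-1}{x-1}$, read off the Thue--Morse recursions $\varepsilon_{2j}=\varepsilon_j$, $\varepsilon_{2j+1}=-\varepsilon_j$ from the coefficients with $k\le m$, and then use the coefficients with $m<k\le 2m$ to force $m=2^l-1$. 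Your symmetric formulation in terms of $P=C-D$ is cleaner than the version in \cite{kiss} (which works with $p_C$ alone, as in equation \eqref{eq1.0} of this paper), and your carry argument at $i=2^{\lfloor\log_2 M\rfloor}$ is a neat direct replacement for the roles played by Lemmas \ref{lem3} and \ref{lem4}. The details all check: $E_{2i}=\varepsilon_i$ is applied only for $2i\le m-1$, the odd-$k$ identities in the upper range are vacuous since $m+1$ is even, and the single-carry computation $s_2(M+2^a)=s_2(M)$ is valid precisely because $2^a\le M-1<2^{a+1}$ leaves bit $a+1$ of $M$ empty.
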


\begin{lemma}\label{lem7}(See \cite[Corollary 1]{kiss}) If $C=A\cap [0,m]$ and $D=B\cap [0,m]$, where $m$ is a positive integer
not of the form $2^l-1$, then there exists a positive integer $m<n<2m$ such that $R_C(n)\not=R_D(n)$.
\end{lemma}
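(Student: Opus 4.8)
The plan is to treat the two implications separately, disposing of the forward (``if'') direction by a short generating-function identity and devoting the bulk of the work to the converse.

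For the ``if'' direction, set $f_S(x)=\sum_{s\in S}x^s$ and use $\sum_{n}R_S(n)x^n=\tfrac12\bigl(f_S(x)^2-f_S(x^2)\bigr)$, so that $R_C(n)=R_D(n)$ for all $n$ is equivalent to $f_C(x)^2-f_C(x^2)=f_D(x)^2-f_D(x^2)$. Writing $a=f_{A_{l-1}}(x)$, $b=f_{B_{l-1}}(x)$ and $t=x^{2^{l-1}+1}$, the proposed sets give $f_C=a+tb$, $f_D=b+ta$ and $f_C(x^2)=a(x^2)+t^2b(x^2)$, whence
\[
\bigl(f_C^2-f_C(x^2)\bigr)-\bigl(f_D^2-f_D(x^2)\bigr)=(1-t^2)\Bigl[(a^2-b^2)-\bigl(a(x^2)-b(x^2)\bigr)\Bigr].
\]
Since $a^2-a(x^2)=2\sum_nR_{A_{l-1}}(n)x^n$ and similarly for $b$, Lemma \ref{lem6} applied with $l-1$ in place of $l$ gives $R_{A_{l-1}}(n)=R_{B_{l-1}}(n)$ for all $n$, so the bracket vanishes and $R_C=R_D$ follows.

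For the converse I would first recast the hypothesis as a single coefficient recursion. With $\epsilon_j=\chi_C(j)-\chi_D(j)\in\{-1,0,1\}$ (so $\epsilon_0=1$, $\epsilon_r=0$, and $\epsilon_j=\pm1$ otherwise), expanding $R_C(n)-R_D(n)=0$ yields, for $0\le n\le m$,
\[
E(n)-\epsilon_{n-r}=[\,n\text{ even}\,]\,\epsilon_{n/2},\qquad E(n):=\sum_{j=0}^{n}\epsilon_j .
\]
Evaluating the equivalent polynomial identity $p(x)s(x)=p(x^2)$, with $p=f_C-f_D$ and $s=f_C+f_D$, at $x=1$ forces $p(1)=0$, i.e. $m$ is even and $|C|=|D|=m/2$. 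Restricting the displayed recursion to $n<r$ (where $\epsilon_{n-r}=0$) gives $\epsilon_{2k}=\epsilon_k$ and $\epsilon_{2k+1}=-\epsilon_k$, so $\epsilon_n=(-1)^{s_2(n)}$ for $0\le n\le r-1$ (here $s_2$ is the binary digit-sum); equivalently $C\cap[0,r-1]=A\cap[0,r-1]$ and $D\cap[0,r-1]=B\cap[0,r-1]$. Next I would apply the reflection Lemma \ref{lem2} to $(m-C,\,m-D)$, which satisfies the same hypotheses with gap $m-r$; running the previous argument on it pins down $C$ and $D$ on $[r+1,m]$ as a (possibly sign-flipped) shifted Thue--Morse block. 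Knowing both tails, the aim is to show $C\cap[0,r-1]$ and $D\cap[0,r-1]$ have equal representation functions for \emph{all} $n$; by Lemma \ref{lem7} this can happen only when $r$ is a power of $2$, the first discrepancy otherwise occurring for some $r-1<n<2r-2$. Matching the cross terms (representations of such $n$ using one low and one high element) against $R_C(n)=R_D(n)$ then forces $r=2^{k}$, and symmetrically $m-r=2^{k'}$.

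The hardest point, I expect, is the final step $k=k'$, i.e. that the gap sits exactly in the middle, $m=2r=2^{k+1}$. Local information is genuinely insufficient here: for instance $(m,r)=(6,2)$ satisfies $E(n)-\epsilon_{n-r}=[\,n\text{ even}\,]\epsilon_{n/2}$ for every $n\le m$ yet fails $R_C=R_D$, the defect surfacing only at $n=8>m$. I would therefore feed the two power-of-two block structures back into the representation identity at the top of the range, producing a maximal run $M-2,M-4,\dots$ lying entirely in $A$ (or in $B$); Lemma \ref{lem5}, together with Lemmas \ref{lem3} and \ref{lem4}, then converts this run into the required relation between $k$ and $k'$, yielding $m=2^{l}$ and $r=2^{l-1}$ with $l=k+1\ge2$. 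Finally, Lemma \ref{lem1} guarantees that for this $(m,r)$ there is at most one admissible pair, and the ``if'' direction exhibits it, completing the identification of $C$ and $D$.
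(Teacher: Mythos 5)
The statement you were asked to prove is Lemma \ref{lem7}: for $C=A\cap[0,m]$ and $D=B\cap[0,m]$ with $m$ a positive integer \emph{not} of the form $2^l-1$, there exists $n$ with $m<n<2m$ such that $R_C(n)\neq R_D(n)$. This is a single existence claim about the truncated Thue--Morse partition; it has no ``if and only if'' structure, and in this paper it is not proved at all but imported from Kiss and S\'andor \cite[Corollary 1]{kiss}. Your proposal instead sketches a proof of Theorem \ref{thm1}, the characterization of pairs $(C,D)$ with $C\cup D=[0,m]\setminus\{r\}$. Nothing in your argument engages with the hypothesis $m\neq 2^l-1$, with the specific sets $A\cap[0,m]$ and $B\cap[0,m]$, or with producing a witness $n$ in the range $(m,2m)$; the target statement is simply never established.

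The mismatch is not repairable by relabeling: your sketch explicitly \emph{invokes} Lemma \ref{lem7} as a tool (``by Lemma \ref{lem7} this can happen only when $r$ is a power of $2$''), so read as a proof of that lemma it is circular. A genuine proof has to work directly with the structure of $A$ and $B$ near the truncation point $m$: one must show that when $m$ is not of the form $2^l-1$, the equality $R_{A\cap[0,m]}(n)=R_{B\cap[0,m]}(n)$, which holds for all $n\le m$ by Lemma \ref{lem6}, necessarily breaks somewhere in $(m,2m)$ --- for instance by exhibiting an $n$ whose representations $n=s+s'$ with $s,s'\le m$ are unbalanced between $A$ and $B$ once the tail of $[0,2^{\lceil\log_2 m\rceil}-1]$ is cut off. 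None of that analysis appears in your write-up, so there is a genuine gap: you have proved (in outline) the wrong statement.
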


\section{Proofs}

\begin{proof}[Proof of Theorem \ref{thm1}.] (Sufficiency). Suppose that there exist a positive integer $l \geq 2$ such that $C=A_{l-1} \cup\left(2^{l-1}+1+B_{l-1}\right)$ and $D=B_{l-1} \cup\left(2^{l-1}+1+A_{l-1}\right)$.
Now we prove that $R_{C}(n)=R_{D}(n)$ for all nonnegative integers $n$. Clearly
$$
\begin{aligned}
R_{C}(n) &=\left|\left\{\left(c, c^{\prime}\right): c<c^{\prime}, c, c^{\prime} \in A_{l-1}, c+c^{\prime}=n\right\}\right|+\left|\left\{\left(c, c^{\prime}\right): c \in A_{l-1}, c^{\prime} \in 2^{l-1}+1+B_{l-1}, c+c^{\prime}=n\right\}\right| \\
&+\left|\left\{\left(c, c^{\prime}\right): c<c^{\prime}, c, c^{\prime} \in 2^{l-1}+1+B_{l-1}, c+c^{\prime}=n\right\}\right| \\
&=R_{A_{l-1}}(n)+|\{(c, c^{\prime}): c \in A_{l-1}, c^{\prime} \in B_{l-1}, c+c^{\prime}=n-(2^{l-1}+1) \}|+R_{B_{l-1}}(n-2(2^{l-1}+1)).
\end{aligned}
$$
Similarly
$$
\begin{gathered}
R_{D}(n)=R_{B_{l-1}}(n)+|\{(d, d^{\prime}): d \in B_{l-1}, d^{\prime} \in A_{l-1}, d+d^{\prime}=n-(2^{l-1}+1)\}|+R_{A_{l-1}}(n-2(2^{l-1}+1)).
\end{gathered}
$$

By Lemma \ref{lem1}, for any positive integer $m$, $R_{A_{l-1}}(m)=R_{B_{l-1}}(m)$. Hence $R_{C}(n)=R_{D}(n)$.

(Necessity). Suppose that $C \cup D=[0, m]\setminus \{r\}$, $C \cap D=\emptyset$, and for any positive
integer $n$, $R_{C}(n)=R_{D}(n)$. Let $C^{\prime}=m-C$, $D^{\prime}=m-D$. Then by
Lemma \ref{lem2}, we have $C^{\prime} \cup D^{\prime}=[0, m]\setminus \{m-r\}$, $C^{\prime} \cap D^{\prime}=\emptyset$ and $R_{C^{\prime}}(n)=R_{D^{\prime}}(n)$ for any positive integer.
Hence we assume that $r \leq \frac{m}{2}$ without loss of generality.

Let $p_{C}(x)=\sum_{i=0}^{m} \chi_{C}(i) x^{i}$, $p_{D}(x)=\sum_{i=0}^{m} \chi_{D}(i) x^{i}$.
Since $C \cup D=$ $[0, m]\setminus \{r\}$, $C \cap D=\emptyset$, it follows that
$$p_{C}(x)+p_{D}(x)=\frac{1-x^{m+1}}{1-x}-x^{r}.$$
Hence $p_{D}(x)=\frac{1-x^{m+1}}{1-x}-p_{C}(x)-x^{r}$.
Since $R_{C}(n)=R_{D}(n)$ for any positive integer $n$, we have
$$\sum_{n=0}^{\infty} R_{C}(n) x^{n}=\sum_{n=0}^{\infty} R_{D}(n) x^{n}.$$
Hence
\begin{eqnarray*}&&\frac{1}{2} p_{C}(x)^{2}-\frac{1}{2} p_{C}(x^{2})=\frac{1}{2}\left(\sum_{i \in C} x^{i}\right)^{2}-\frac{1}{2} \sum_{i \in C} x^{2 i}=\sum_{\substack{i_{1},i_2 \in C\\ i_{1}<i_{2}}} x^{i_{1}} \cdot x^{i_{2}}\\
&=&\sum_{\substack{i_{1},i_2 \in C\\ i_{1}<i_{2}}} x^{i_{1}+i_{2}}=\sum_{n=0}^{\infty} R_{C}(n) x^{n}=\sum_{n=0}^{\infty} R_{D}(n) x^{n}
=\frac{1}{2} p_{D}(x)^{2}-\frac{1}{2} p_{D}(x^{2}),
\end{eqnarray*}
and then
\begin{eqnarray*}p_{C}(x)^{2}-p_{C}\left(x^{2}\right)&=&\left(\frac{1-x^{m+1}}{1-x}-p_{C}(x)-x^{r}\right)^{2}-\left(\frac{1-x^{2 m+2}}{1-x^{2}}-p_{C}\left(x^{2}\right)-x^{2 r}\right)\\
&=&\left(\frac{1-x^{m+1}}{1-x}\right)^{2}+p_{C}(x)^{2}+x^{2 r}-2 \frac{1-x^{m+1}}{1-x} \cdot p_{C}(x)-2 \frac{1-x^{m+1}}{1-x} \cdot x^{r}\\
&&+2 p_{C}(x)x^{r}-\frac{1-x^{2 m+2}}{1-x^{2}}+p_{C}\left(x^{2}\right)+x^{2 r}.
\end{eqnarray*}
Therefore,
\begin{eqnarray}\label{eq1.0}2 p_{C}\left(x^{2}\right)=\frac{1-x^{2 m+2}}{1-x^{2}}+2 p_{C}(x) \frac{1-x^{m+1}}{1-x}-\left(\frac{1-x^{m+1}}{1-x}\right)^{2}+2 x^{r} \frac{1-x^{m+1}}{1-x}-2 p_{C}(x) x^{r}-2 x^{2 r}.\end{eqnarray}

We first check the necessity for $1\le r\le 5$.

For $r=1$, if there is a suitable decomposition $C\cup D=[0,m]\setminus \{ 1 \}$ and $m\ge 4$,
then $0\in C$ and $2,3,4\in D$. Let $E=m-C$, $F=m-D$. Then
$$(E\cap [0,m-2])\cap (F\cap [0,m-2])=\emptyset,\quad (E\cap [0,m-2])\cup (F\cap [0,m-2])=[0,m-2].$$
By Lemma \ref{lem2}, $R_E(n)=R_F(n)$ for all integers $n\le m-2$. By Lemma \ref{lem1}, $F\cap [0,m-2]=A\cap [0,m-2]$ or $B\cap [0,m-2]$, but this is a contradiction, because $F\cap [0,m-2]$ contains three consecutive integers $m-4$, $m-3$ and $m-2$. Hence the only solution is $m=2$.

For $r=2,3,4,5$, the proof is similar. Here we give the details in the case $r=4$.
Since $0\in C$, we have $1\in D$, otherwise $R_C(1)\not=R_D(1)$. Similarly we can obtain
$2\in D$, $3\in C$, $5\in D$, $6\in C$, $7\in C$, $8\in D$,
$9\in D$, $10\in C$, $11\in D$, $12\in C$, $13\in D$, and we can not put $14$ in both $C$ and $D$.
Hence if $m\ge 14$, there is no $C$ and $D$. For $m<13$, we can check only $m=8$ holds and
$C=\{0,3,6,7\}$, $D=\{1,2,5,8\}$.

Now we assume $r\ge 6$. First we shall prove that $r$ is even.
Suppose that $r$ is odd. Let $k$ be an even integer such that $r<k<2r$.
Then the coefficient of $x^k$ in \eqref{eq1.0} is
\begin{eqnarray}\label{eq1.1}
2 \chi_{C}\left(\frac{k}{2}\right)=1+2 \sum_{i \leq k} \chi_{C}(i)-(k+1)+2-2 \chi_{C}(k-r),
\end{eqnarray}
and the coefficient of $x^{k-1}$ in \eqref{eq1.0} is
\begin{eqnarray}\label{eq1.2}0=0+2 \sum_{i \leq k-1} \chi_{C}(i)-k+2-2 \chi_{C}(k-1-r).\end{eqnarray}
Calculating $(\eqref{eq1.1}-\eqref{eq1.2})\times \frac{1}{2}$, we have
\begin{eqnarray}\label{eq1.3}
\chi_{C}\left(\frac{k}{2}\right)=\chi_{C}(k)-\chi_{C}(k-r)+\chi_{C}(k-1-r).
\end{eqnarray}
By Lemma \ref{lem1}, we have
\begin{eqnarray}\label{eq1.4} C \cap[0, r-1]=A \cap[0, r-1],\end{eqnarray}
\begin{eqnarray}\label{eq1.5} D \cap[0, r-1]=B \cap[0, r-1].\end{eqnarray}

Since $k-1-r$ is even, $k-r<r$, by \eqref{eq1.4} and the definition of $A$,
we have
$$\chi_{C}(k-r)+\chi_{C}(k-1-r)=1.$$

If $\chi_{C}(k-r)=0, \chi_{C}(k-1-r)=1$, then by \eqref{eq1.3}, we have $\chi_{C}(k)=0$ and $\chi_{C}\left(\frac{k}{2}\right)=1$.

If $\chi_{C}(k-r)=1, \chi_{C}(k-1-r)=0$, then by \eqref{eq1.3}, we have $\chi_{C}(k)=1$ and $\chi_{C}\left(\frac{k}{2}\right)=0$.

Hence $\chi_{C}(k-1-r)=\chi_{C}\left(\frac{k}{2}\right)$.

Let $k=2 r-2^{i+1}+2$, where $i \geq 1$. Then $\frac{k}{2}=r+1-2^{i}$, $k-1-r=r+1-$ $2^{i+1}$.
Hence $\chi_{C}\left(r+1-2^{i}\right)=\chi_{C}\left(r+1-2^{i+1}\right)$. Therefore,
$$
\chi_{C}(r+1-2)=\chi_{C}(r+1-4)=\cdots=\chi_{C}\left(r+1-2^{\left\lceil\log _{2}(r+1) \rceil-1\right.}\right),
$$
and so
$$
\chi_{A}(r+1-2)=\chi_{A}(r+1-4)=\cdots=\chi_{A}\left(r+1-2^{\left\lceil\log _{2}(r+1)\rceil-1\right.}\right).
$$
By Lemma \ref{lem5}, it follows that $r+1=2^{u}-2$ and $\chi_{A}(r+1)=\chi_{A}(r)$.

Next we shall prove $r\in B$.
Suppose that $r \in A$. Since $0 \in C$, $r \notin C$, by \eqref{eq1.4}, it follows that
\begin{eqnarray}\label{eq1.6}
R_{C}(r)=R_{C \cap[0, r-1]}(r)=R_{A \cap[0, r-1]}(r)=R_{A \cap[0, r]}(r)-1.
\end{eqnarray}
Similarly, by \eqref{eq1.5}, it follows that
\begin{eqnarray}\label{eq1.7}
R_{D}(r)=R_{D \cap[0, r-1]}(r)=R_{B \cap[0, r-1]}(r)=R_{B \cap[0, r]}(r).
\end{eqnarray}
Noting that
$$R_{A \cap[0, r]}(r)=R_{A}(r)=R_{B}(r)=R_{B \cap[0, r]}(r),$$
we have $R_{C}(r) \neq R_{D}(r)$, a contradiction.
Hence $r \in B$ and $r+1 \in B$, and so $r \notin A$.

By \eqref{eq1.4} and $r \notin C$, we have $C \cap[0, r]=A \cap[0, r]$. Hence
$$
R_{C}(r+1)=R_{C \cap[0, r]}(r+1)+\chi_{C}(r+1)=R_{A \cap[0, r]}(r+1)+\chi_{C}(r+1).
$$

By \eqref{eq1.5} and $1, r \in B, r \notin D$, we have
$$R_{D}(r+1)=R_{B \cap[0, r]}(r+1)-1.$$

Noting that $r,r+1\in B$, we have $$R_{A \cap[0, r]}(r+1)=R_{A \cap[0, r+1]}(r+1),
\quad R_{B \cap[0, r]}(r+1)=R_{B \cap[0, r+1]}(r+1).$$ Since
$$R_{A \cap[0, r+1]}(r+1)=R_{A}(r+1)=R_{B}(r+1)=R_{B \cap[0, r+1]}(r+1),$$
it follows that $R_{C}(r+1) \neq R_{D}(r+1)$, a contradiction.

Therefore, $r$ is not odd.

Suppose that $r$ is even. Take an even integer $k$ with $r \leq k \leq 2 r$.
Then the coefficient of $x^{k-2}$ in \eqref{eq1.0} is
\begin{eqnarray}\label{eq1.9}
2 \chi_{C}\left(\frac{k-2}{2}\right)=1+2 \sum_{i \leq k-2} \chi_{C}(i)-(k-1)+2-2 \chi_{C}(k-2-r),
\end{eqnarray}
and the coefficient of $x^{k-1}$ in \eqref{eq1.0} is
\begin{eqnarray}\label{eq1.10}
0=0+2 \sum_{i \leq k-1} \chi_{C}(i)-k+2-2 \chi_{C}(k-1-r).
\end{eqnarray}
Calculating $(\eqref{eq1.9}-\eqref{eq1.10})\times \frac{1}{2}$, we obtain $$\chi_{C}\left(\frac{k-2}{2}\right)=1-\chi_{C}(k-1)-\chi_{C}(k-2-r)+\chi_{C}(k-1-r).$$

By Lemma \ref{lem2}, $k-1-r<r$ and $k-2-r$ is even. By \eqref{eq1.4} and the definition of $A$,
$$\chi_{C}(k-2-r)+\chi_{C}(k-1-r)=1.$$

If $\chi_{C}(k-2-r)=0$ and $\chi_{C}(k-1-r)=1$, then $\chi_{C}(k-1)=1, \chi_{C}\left(\frac{k-2}{2}\right)=1$.

If $\chi_{C}(k-2-r)=1$, $\chi_{C}(k-1-r)=0$, then $\chi_{C}(k-1)=0, \chi_{C}\left(\frac{k-2}{2}\right)=0$.

By the above two cases, we have $\chi_{C}(k-1-r)=\chi_{C}\left(\frac{k-2}{2}\right)$.
Take $$k=2 r-2^{i+1}, i=0,1,\ldots,\lceil \log_2(r-1)\rceil-2,$$ then $\frac{k-2}{2}=r-1-2^{i}, k-1-r=r-1-2^{i+1}$, and so $\chi_{C}\left(r-1-2^{i}\right)=\chi_{C}\left(r-1-2^{i+1}\right)$. Hence
$$
\chi_{C}(r-1-1)=\chi_{C}(r-1-2)=\cdots=\chi_{C}\big(r-1-2^{\left\lceil \log _{2}(r-1)\right\rceil -1}\big).
$$
By lemma \ref{lem4}, we have $r-1=2^{\left\lceil\log _{2}(r-1)\right\rceil}-1$. Write $l=\left\lceil\log _{2}(r-1)\right\rceil+1$. Then $r=2^{l-1}$. Noting that $r\le \frac{m}{2}$, then $[0,2r]\subseteq [0,m]$.
By Lemma 1 and the sufficiency of Theorem 1, it follows that
$$
C \cap[0,2 r]=A_{l-1} \cup (2^{l-1}+1+B_{l-1} ), \quad D \cap[0,2 r]=B_{l-1} \cup (2^{l-1}+1+A_{l-1}).
$$

Next we shall prove $m=2r$. First, we prove $m<3 \cdot 2^{l-1}=2^{l-1}+2^{l}$.
Suppose that $m \geq 3 \cdot 2^{l-1}$. First we shall prove the following structure of $C$ and $D$:
$$
\begin{aligned}
&C \cap\left[0,3 \cdot 2^{l-1}\right]=A_{l-1} \cup\left(2^{l-1}+1+B_{l-1}\right) \cup\left(2^{l}+1+B_{l-1}\right),\\
&D \cap\left[0,3 \cdot 2^{l-1}\right]=B_{l-1} \cup\left(2^{l-1}+1+A_{l-1}\right) \cup\left(2^{l}+1+A_{l-1}\right).
\end{aligned}
$$
Write
$$A_{l-1} \cup\left(2^{l-1}+1+B_{l-1}\right) \cup\left(2^{l}+1+B_{l-1}\right):=E,\quad
B_{l-1} \cup\left(2^{l-1}+1+A_{l-1}\right) \cup\left(2^{l}+1+A_{l-1}\right):=F.$$
If $n \leq 2^{l}$, then clearly we have $R_{E}(n)=R_{F}(n)$. Now we assume that $2^{l}<n \leq 3 \cdot 2^{l-1}$. It follows that
$$
\begin{aligned}
R_{E}(n) &=\left|\left\{\left(c, c^{\prime}\right): c \in A_{l-1}, c^{\prime} \in 2^{l-1}+1+B_{l-1}, c+c^{\prime}=n\right\}\right| \\
&+\left|\left\{\left(c, c^{\prime}\right): c \in A_{l-1}, c^{\prime} \in 2^{l}+1+B_{l-1}, c+c^{\prime}=n\right\}\right| \\
&+\left|\left\{\left(c, c^{\prime}\right): c<c^{\prime}, c, c^{\prime} \in 2^{l-1}+1+B_{l-1}, c+c^{\prime}=n\right\}\right| \\
&=|\{(c, c^{\prime}): c \in A_{l-1}, c^{\prime} \in B_{l-1}, c+c^{\prime}=n-(2^{l-1}+1)\}| \\
&+\mid\{(c, c^{\prime}): c \in A_{l-1}, c^{\prime} \in B_{l-1}, c+c^{\prime}=n-(2^{l}+1) \mid+R_{B_{l-1}}(n-2(2^{l-1}+1))\end{aligned},$$
and
$$\begin{aligned} R_{F}(n)=&\left|\left\{\left(d, d^{\prime}\right): d \in B_{l-1}, d^{\prime} \in 2^{l-1}+1+A_{l-1}, d+d^{\prime}=n\right\}\right| \\
&+\left|\left\{\left(d, d^{\prime}\right): d \in B_{l-1}, d^{\prime} \in 2^{l}+1+A_{l-1}, d+d^{\prime}=n\right\}\right| \\
&+\left|\left\{\left(d, d^{\prime}\right): d<d^{\prime}, d, d^{\prime} \in 2^{l-1}+1+B_{l-1}, d+d^{\prime}=n\right\}\right| \\
&=|\{(d, d^{\prime}): d \in B_{l-1}, d^{\prime} \in A_{l-1}, d+d^{\prime}=n-(2^{l-1}+1)\}| \\
&+\mid\{(d, d^{\prime}): d \in B_{l-1}, d^{\prime} \in A_{l-1}, d+d^{\prime}=n-(2^{l}+1) \mid+R_{A_{l-1}}(n-2(2^{l-1}+1)).
\end{aligned}.
$$

By Lemma \ref{lem6}, $R_{A_{l-1}}(m)=R_{B_{l-1}}(m)$ holds for any positive integers $m$, and then $R_{E}(n)=R_{F}(n)$.
By Lemma \ref{lem1}, $C\cap [0,3\cdot 2^{l-1}]=E$, $D\cap [0,3\cdot 2^{l-1}]=F$.

Next we shall prove that $m\ge 3 \cdot 2^{l-1}$ is impossible.

If $m>3 \cdot 2^{l-1}$, then by $0\in C$ we have
$$\begin{aligned}
R_{C}\left(3 \cdot 2^{l-1}+1\right) &=\chi_{C}\left(3 \cdot 2^{l-1}+1\right)+\left|\left\{\left(c, c^{\prime}\right): c \in A_{l-1}, c^{\prime} \in 2^{l}+1+B_{l-1}, c+c^{\prime}=3 \cdot 2^{l-1}+1\right\}\right| \\
&+\left|\left\{\left(c, c^{\prime}\right): c<c^{\prime}, c, c^{\prime} \in 2^{l-1}+1+B_{l-1}, c+c^{\prime}=3 \cdot 2^{l-1}+1\right\}\right| \\
&=\chi_{C}\left(3 \cdot 2^{l-1}+1\right)+\left|\left\{\left(c, c^{\prime}\right): c \in A_{l-1}, c^{\prime} \in B_{l-1},c+c^{\prime}=2^{l-1}\right\}\right|+R_{B_{l-1}}\left(2^{l-1}-1\right)
\end{aligned}$$
Similarly we have
$$R_{D}\left(3 \cdot 2^{l-1}+1\right)=\left|\left\{\left(d, d^{\prime}\right): d \in B_{l-1}, d^{\prime} \in A_{l-1}, d+d^{\prime}=2^{l-1}\right\}\right|+R_{A_{l-1}}\left(2^{l-1}-1\right).$$
By Lemma \ref{lem6}, $R_{A_{l-1}}(2^{l-1}-1)=R_{B_{l-1}}(2^{l-1}-1)$. Hence
$R_{C}\left(3 \cdot 2^{l-1}+1\right)=R_{D}\left(3 \cdot 2^{l-1}+1\right)$ if and only if
$\chi_{C}\left(3 \cdot 2^{l-1}+1\right)=0$, that is, $3 \cdot 2^{l-1}+1 \in D$.

Since $1,3 \cdot 2^{l-1}+1,2^{l}+1,2^{l-1}+1 \in D$ and
$$3 \cdot 2^{l-1}+2=1+\left(1+3 \cdot 2^{l-1}\right)=\left(2^{l-1}+1\right)+\left(2^{l}+1\right),$$
it follows that
$$
\begin{aligned}
&R_{D}\left(3 \cdot 2^{l-1}+2\right)=1+1+\left|\left\{\left(d, d^{\prime}\right): d \in B_{l-1}, d^{\prime} \in 2^{l}+1+A_{l-1}, d+d^{\prime}=3 \cdot 2^{l-1}+2\right\}\right| \\
&+\left|\left\{\left(d, d^{\prime}\right): d<d^{\prime}, d, d^{\prime} \in 2^{l-1}+1+A_{l-1}, d+d^{\prime}=3 \cdot 2^{l-1}+2\right\}\right|\\
&=1+1+\mid\left\{\left(d, d^{\prime}\right): d \in B_{l-1}, d^{\prime} \in A_{l-1}, d+d^{\prime}=2^{l-1}+1 \mid+R_{A_{l-1}}(2^{l-1})\right.,\end{aligned}$$
Similarly,
$$
\begin{aligned}
&R_{C}\left(3 \cdot 2^{l-1}+2\right)=\chi_{C}\left(3 \cdot 2^{l-1}+2\right)+\left|\left\{\left(c, c^{\prime}\right): c \in A_{l-1}, c^{\prime} \in B_{l-1}, c+c^{\prime}=2^{l-1}+1\right\}\right|+R_{B_{l-1}}(2^{l-1}).
\end{aligned}
$$
Hence $R_{C}\left(3 \cdot 2^{l-1}+2\right) \neq R_{D}\left(3 \cdot 2^{l-1}+2\right)$, a contradiction.

If $m=3 \cdot 2^{l-1}$, by $3 \cdot 2^{l-1}+2=(2^{l-1}+1)+(2^{l}+1)$, then we have
$$\begin{aligned}
&R_{C} (3 \cdot 2^{l-1}+2 )=\mid \{ (c, c^{\prime} ): c \in A_{l-1}, c^{\prime} \in B_{l-1}, c<c^{\prime}, c+c^{\prime}=2^{l-1}+1 \mid+R_{B_{l-1}} (2^{l-1} ) , \\
&R_{D} (3 \cdot 2^{l-1}+2 )=1+\mid \{ (d, d^{\prime} ): d \in B_{l-1}, d^{\prime} \in A_{l-1}, d<d^{\prime}, d+d^{\prime}=2^{l-1}+1 \mid+R_{A_{l-1}}(2^{l-1}).
\end{aligned}$$

Hence $R_{C}\left(3 \cdot 2^{l-1}+2\right) \neq R_{D}\left(3 \cdot 2^{l-1}+2\right)$, a contradiction.

Therefore, $m<3 \cdot 2^{l-1}$.

Now we have
$$
\begin{aligned}
&C=A_{l-1} \cup\left(2^{l-1}+1+B_{l-1}\right) \cup\left(2^{l}+1+\left(B_{l-1} \cap\left[0, m-\left(2^{l}+1\right)\right]\right)\right), \\
&D=B_{l-1} \cup\left(2^{l-1}+1+A_{l-1}\right) \cup\left(2^{l}+1+\left(A_{l-1} \cap\left[0, m-\left(2^{l}+1\right)\right]\right)\right).
\end{aligned}
$$

Next we prove $m=2^{l}$. Suppose that $m>2^{l}$.

If $m-\left(2^{l}+1\right) \neq 2^{k}-1$, where $k$ is a nonnegative integer.
By Lemma \ref{lem7}, there exists an integer $u$ satisfying $m-\left(2^{l}+1\right)<u<2\left[m-\left(2^{l}+1\right)\right]$ such that
\begin{eqnarray}\label{eq12}R_{A \cap\left[0, m-\left(2^{l}+1\right)\right]}(u)=R_{A_{l-1} \cap\left[0, m-\left(2^{l}+1\right)\right]}(u) \neq R_{B_{l-1} \cap\left[0, m-\left(2^{l}+1\right)\right]}(u)=R_{B \cap\left[0, m-\left(2^{l}+1\right)\right]}(u).\end{eqnarray}

Since $m+(2^{l}+1)<u+2(2^{l}+1)<2 m$, it follows that
\begin{eqnarray*}&&R_{C}(2(2^{l}+1)+u)=R_{2^{l}+1+(B_{l-1} \cap [0, m-(2^{l}+1)])}
(2(2^{l}+1)+u)\\
&=&R_{B_{l-1} \cap [0, m-(2^{l}+1)])}(u)=R_{B \cap [0, m-(2^{l}+1)]}(u).
\end{eqnarray*}
Similarly
\begin{eqnarray*}&&R_{D}(2(2^{l}+1)+u)=R_{2^{l}+1+(A_{l-1} \cap [0, m-(2^{l}+1)])}
(2(2^{l}+1)+u)\\
&=&R_{A_{l-1} \cap [0, m-(2^{l}+1)])}(u)=R_{A \cap [0, m-(2^{l}+1)]}(u).
\end{eqnarray*}

Since $R_{C}\left(2\left(2^{l}+1\right)+u\right)=R_{D}\left(2\left(2^{l}+1\right)+u\right)$,
it follows that $R_{A \cap\left[0, m-\left(2^{l}+1\right)\right]}(u)=R_{B \cap\left[0, m-\left(2^{l}+1\right)\right]}(u)$, a contradiction with \eqref{eq12}.

Now we suppose that $m-(2^l+1)=2^k-1$. Noting that $m<3\cdot 2^{l-1}$, we have
$k<l-1$. Hence
$$
\begin{aligned}
&C=A_{l-1} \cup (2^{l-1}+1+B_{l-1} ) \cup (2^{l}+1+B_{k} ) \\
&D=B_{l-1} \cup (2^{l-1}+1+A_{l-1} ) \cup (2^{l}+1+A_{k} ).
\end{aligned}
$$

If $k=0$, then $2^{l}+1 \in D$. Now we divide the following two cases.

{\bf Case 1.} $2^{l-1}-1\in A_{l-1}$. That is, $2^l\in D$. Hence $2^{l+1}+1=(2^l+1)+2^l$,
and so $R_{C}(2^{l+1}+1)=0$, $R_{D}(2^{l+1}+1)=1$, a contradiction.

{\bf Case 2.} $2^{l-1}-1 \in B_{l-1}$. That is, $2^{l} \in C$. Hence $2^{l-1}-2 \in A_{l-1}$, and so $2^{l}-1 \in D$. Since $2^{l+1}=(2^{l}+1)+(2^{l}-1)$, it follows that $R_{C}(2^{l+1})=0$, $R_{D}(2^{l+1})=1$, a contradiction.

Therefore, we assume that $k>0$. Let $C^{\prime}=2^{l}+2^{k}-C$, $D^{\prime}=2^{l}+2^{k}-D$.

{\bf Case 1.} $2\nmid l-1$, $2\nmid k$. It follows that
$$
\begin{aligned}
&C^{\prime}=A_{k} \cup (2^{k}+A_{l-1}) \cup (2^{l-1}+2^{k}+1+B_{l-1}),\\
&D^{\prime}=B_{k} \cup (2^{k}+B_{l-1}) \cup (2^{l-1}+2^{k}+1+A_{l-1}).
\end{aligned}
$$

{\bf Case 2.} $2\nmid l-1$, $2\mid k$. Then
$$
\begin{aligned}
&C^{\prime}=B_{k} \cup (2^{k}+A_{l-1}) \cup (2^{l-1}+2^{k}+1+B_{l-1}),\\
&D^{\prime}=A_{k} \cup (2^{k}+B_{l-1}) \cup (2^{l-1}+2^{k}+1+A_{l-1}).
\end{aligned}
$$

{\bf Case 3.} $2\mid l-1$, $2\nmid k$. Then
$$
\begin{aligned}
&C^{\prime}=A_{k} \cup (2^{k}+B_{l-1} ) \cup (2^{l-1}+2^{k}+1+A_{l-1} ) \\
&D^{\prime}=B_{k} \cup (2^{k}+A_{l-1} ) \cup (2^{l-1}+2^{k}+1+B_{l-1} ).
\end{aligned}
$$

{\bf Case 4.} $2\mid l-1$, $2\mid k$. Then
$$
\begin{aligned}
&C^{\prime}=B_{k} \cup\left(2^{k}+B_{l-1}\right) \cup\left(2^{l-1}+2^{k}+1+A_{l-1}\right),\\
&D^{\prime}=A_{k} \cup\left(2^{k}+A_{l-1}\right) \cup\left(2^{l-1}+2^{k}+1+B_{l-1}\right).
\end{aligned}
$$

Hence $C^{\prime} \cup D^{\prime}= [0,2^{l}+2^{k} ]\setminus \{2^{l-1}+2^{k} \}$,
$C^{\prime} \cap D^{\prime}=\emptyset$. By Lemma \ref{lem2}, it follows that
$R_{C^{\prime}}(n)=R_{D^{\prime}}(n)$ for all positive integers $n$. By Lemma \ref{lem1},
it suffices to consider the following two cases.

(I) $C^{\prime} \cap  [0,2^{l-1}+2^{k}-1 ]=A \cap [0,2^{l-1}+2^{k}-1 ]$ and
$D^{\prime} \cap  [0,2^{l-1}+2^{k}-1 ]=B \cap [0,2^{l-1}+2^{k}-1 ]$.

(II) $C^{\prime} \cap [0,2^{l-1}+2^{k}-1 ]=B \cap [0,2^{l-1}+2^{k}-1 ]$
and $D^{\prime} \cap [0,2^{l-1}+2^{k}-1 ]=A \cap [0,2^{l-1}+2^{k}-1 ]$.

If Case 1 and Case (I) hold, then
$C^{\prime} \cap\left[0,2^{k}-1\right]=A \cap\left[0,2^{k}-1\right]=A_{k}.$
By the property of $A$, we have
$C^{\prime} \cap [2^{k}, 2^{k+1}-1]=2^{k}+B_{k}.$
By Case 1, we have $C^{\prime} \cap\left[2^{k}, 2^{k+1}-1\right]=2^{k}+A_{k}$, a contradiction.

If Case 1 and Case (II) hold, then
$C^{\prime} \cap\left[0,2^{k}-1\right]=B \cap\left[0,2^{k}-1\right]=B_{k}.$
By Case 1, we have $C^{\prime} \cap\left[0,2^{k}-1\right]=A_{k}$, a contradiction.

If Case 2 and Case (I) hold, then
$C^{\prime} \cap\left[0,2^{k}-1\right]=A_{k}$.
By Case 2, we have $C^{\prime} \cap\left[0,2^{k}-1\right]=B_{k}$, a contradiction.

If Case 2 and Case (II) hold, then $k<l-1$, and so $2^{k}+1<2^{l-1}-1$.
By the property of $A$, it follows that $2^{k}+1 \in A_{l-1}$, and then $2^{k}+\left(2^{k}+1\right) \in 2^{k}+A_{l-1} \subseteq C^{\prime}$. Hence $2^{k}+\left(2^{k}+1\right) \in C^{\prime}$.
By Case (II), $2^{k}+\left(2^{k}+1\right) \in B$. But $2^{k}+$ $\left(2^{k}+1\right)=2^{k+1}+1 \in A$, a contradiction.

If Case 3 and Case (I) hold, then by $2^{k}+1 \in A_{l-1}$, we have $2^{k}+\left(2^{k}+1\right) \in 2^{k}+A_{l-1} \subseteq D^{\prime}$.
By Case (I), $2^{k}+\left(2^{k}+1\right) \in B$. However, by the property of $A$, $2^{k}+\left(2^{k}+1\right)=2^{k+1}+1 \in A$, a contradiction.

If Case 3 and Case (II) hold, then $C^{\prime} \cap [0,2^{k}-1]=B \cap [0,2^{k}-1]=B_{k}$.
By Case 3, $C^{\prime} \cap [0,2^{k}-1 ]=A_{k}$, a contradiction.

If Case 4 and Case (I) hold, then $C^{\prime} \cap\left[0,2^{k}-1\right]=A_{k}$.
By Case 4, $C^{\prime} \cap\left[0,2^{k}-1\right]=B_{k}$, a contradiction.

If Case 4 and Case (II) hold, then $C^{\prime} \cap\left[0,2^{k}-1\right]=B \cap\left[0,2^{k}-1\right]=B_{k}.$
By the property of $B$,
$C^{\prime} \cap\left[2^{k}, 2^{k+1}-1\right]=2^{k}+A_{k}.$
By Case 4, $C^{\prime} \cap\left[2^{k}, 2^{k+1}-1\right]=2^{k}+B_{k}$, a contradiction.

Therefore, $m=2r$.

\end{proof}

%


\end{document}